\newtheorem{theorem}{Theorem}
\newtheorem{lemma}[theorem]{Lemma}
\begin{document}

\title{Bounds on multiple self-avoiding polygons}

\author[K. Hong]{Kyungpyo Hong}
\address{National Institute for Mathematical Sciences, Daejeon 34047, Korea}
\email{kphong@nims.re.kr}
\author[S. Oh]{Seungsang Oh}
\address{Department of Mathematics, Korea University, Seoul 02841, Korea}
\email{seungsang@korea.ac.kr}

\thanks{Mathematics Subject Classification 2010: 57M25, 82B20, 82B41, 82D60}
\thanks{The corresponding author(Seungsang Oh) was supported by the National Research Foundation of Korea(NRF) grant funded by the Korea government(MSIP) (No. NRF-2017R1A2B2007216).}

\maketitle

\begin{abstract}
A self-avoiding polygon is a lattice polygon consisting of a closed self-avoiding walk on a square lattice.
Surprisingly little is known rigorously about the enumeration of self-avoiding polygons,
although there are numerous conjectures that are believed to be true
and strongly supported by numerical simulations.
As an analogous problem of this study, 
we consider multiple self-avoiding polygons in a confined region, 
as a model for multiple ring polymers in physics.
We find rigorous lower and upper bounds of the number $p_{m \times n}$ 
of distinct multiple self-avoiding polygons in the $m \times n$ rectangular grid on the square lattice.
For $m=2$, $p_{2 \times n} = 2^{n-1}-1$.
And, for integers $m,n \geq 3$,
$$2^{m+n-3} \left(\frac{17}{10}\right)^{(m-2)(n-2)} \ \leq \ p_{m \times n} \ \leq \
2^{m+n-3} \left(\frac{31}{16}\right)^{(m-2)(n-2)}.$$
\end{abstract}

\section{Introduction} \label{sec:intro}

The enumeration of self-avoiding walks and polygons is
one of the most important and classic combinatorial problems~\cite{G, MS}.
These were first introduced by the chemist Paul Flory~\cite{F} as models of polymers in dilute solution.
Determining the exact number of self-avoiding walks and polygons is still unsolved,
although there are mathematically proved methods for approximating them.

A particularly interesting polygon model of a ring polymer with excluded volume is a lattice polygon
which places in a regular lattice, usually the two dimensional square lattice or the three dimensional cubic lattice.
Here we consider the problem of self-avoiding polygons (SAP) on the square lattice $\mathbb{Z}^2$.
Let $p_n$ denote the number of distinct SAPs of length $n$
counted up to translational invariance on the square lattice $\mathbb{Z}^2$.
Hammersley~\cite{H} proved that the number $p_n$ grows exponentially:
more precisely the limit $\mu = \lim_{n \rightarrow \infty} p_{2n}^{\frac{1}{2n}}$ is known to exist.
Furthermore it is generally believed~\cite{MS} that $p_{2n} \sim \mu^{2n} n^{\alpha -3}$
as $n \rightarrow \infty$.
Here $\mu$ is called the {\em connective constant\/} of the lattice, and $\alpha$ is the {\em critical exponent\/}.
The reader can find more details in~\cite{J}.

In this paper, we are interested in another point of view of scaling arguments of multiple polygons
on the square lattice, related to the size of a rectangle containing them instead of their length;
see Figure~\ref{fig1}.
Let $\mathbb{Z}_{m \times n}$ denote the $m \times n$ rectangular grid on $\mathbb{Z}^2$,
and let $p_{m \times n}$ be the number of distinct multiple self-avoiding polygons (MSAP) 
in $\mathbb{Z}_{m \times n}$.
Here two MSAPs are considered to be different even though one can be translated upon the other.
Note that in physics they serve as a model for multiple ring polymers in a confined region.

\begin{figure}[h]
\includegraphics{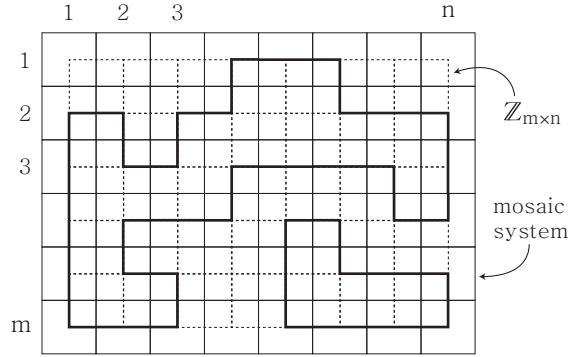}
\caption{Two different viewpoints of a MSAP model
in the confined square lattice $\mathbb{Z}_{m \times n}$
and in the mosaic system (explained in Section 2).}
\label{fig1}
\end{figure}

It is relatively easy to calculate that $p_{2 \times n} = 2^{n-1}-1$ for $m=2$.
But, for larger $m,n$ of $p_{m \times n}$,
the problem becomes increasingly difficult due to its non-Markovian nature.
The main purpose of this paper is to establish rigorous lower and upper bounds for $p_{m \times n}$.

\begin{theorem} \label{thm:polygon}
For integers $m,n \geq 3$,
$$2^{m+n-3} \left(\frac{17}{10}\right)^{(m-2)(n-2)} \ \leq \ p_{m \times n} \ \leq \
2^{m+n-3} \left(\frac{31}{16}\right)^{(m-2)(n-2)}.$$
\end{theorem}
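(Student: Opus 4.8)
\emph{Sketch of the intended argument.}
The plan is to convert the enumeration into a transfer‑matrix (``state matrix'') recursion that builds a configuration one tile at a time, and then to bound the multiplicative growth of the count tile by tile. Concretely, I would first express $p_{m\times n}+1$ as a distinguished entry of a product of sparse matrices: sweeping the mosaic board set up in Section~2 in reading order --- bar by bar, and within each bar tile by tile --- one carries a \emph{state} recording the pattern of strands meeting the current (staircase‑shaped) frontier, and adjoining one tile updates the state by a purely local rule and multiplies the number of admissible partial configurations by a small, possibly zero, integer factor. (The ``$+1$'' accounts for the all‑blank mosaic.) The case $m=2$, where the frontier carries essentially one bit and every step admits two continuations, reproduces $p_{2\times n}=2^{n-1}-1$ and anchors the scheme.

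Next I would isolate the boundary/interior split that produces the exponents. Among the $(m-1)(n-1)$ tiles, exactly $(m-1)+(n-1)-1=m+n-3$ lie in the first row or first column, and the other $(m-1)(n-1)-(m+n-3)=(m-2)(n-2)$ are \emph{interior}. A short analysis of the local rule is intended to show that adjoining a first‑row or first‑column tile always leaves exactly two continuations, hence doubles the count, whereas adjoining an interior tile completes one interior lattice point --- which \emph{forces} one of the new strand choices (or kills the partial configuration) and leaves one choice free --- so that its multiplier is $0$ or $2$ and depends only on a bounded window of the frontier. Thus $p_{m\times n}+1 = 2^{m+n-3}\,I_{m,n}$, with $I_{m,n}$ accumulated over the interior tiles, and the theorem reduces to the two‑sided estimate $\bigl(\tfrac{17}{10}\bigr)^{(m-2)(n-2)}\le I_{m,n}\le\bigl(\tfrac{31}{16}\bigr)^{(m-2)(n-2)}$.

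For the upper bound I would put a positive weight $w(\sigma)$ on frontier states, normalized so the unique empty start and end states carry weight $1$, and track $\Phi=\sum_{\text{partial configs}}w(\sigma)$. It suffices to pick $w$ --- plausibly a product over the pending strands of a few fixed per‑strand weights, or a mild elaboration of this --- so that adjoining any interior tile gives $\Phi\mapsto\Phi'\le\tfrac{31}{16}\,\Phi$ and adjoining any boundary tile gives $\Phi\mapsto 2\Phi$; then $p_{m\times n}+1=\Phi_{\mathrm{final}}\le 2^{m+n-3}\bigl(\tfrac{31}{16}\bigr)^{(m-2)(n-2)}$. The lower bound I would obtain symmetrically: either by dualizing the weight to force a per‑interior‑tile factor of at least $\tfrac{17}{10}$, or, more concretely, by restricting the sweep to an explicitly described sub‑family --- extending each interior tile only through a fixed favourable set of local patterns --- for which the number of surviving configurations is at least $2^{m+n-3}\bigl(\tfrac{17}{10}\bigr)^{(m-2)(n-2)}$.

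The hard part is the design of $w$, that is, verifying the sub‑ and super‑invariance inequalities for the bounded‑window transition operator. Once the window size is fixed each such inequality is a finite linear condition, so the real work is (i) choosing the window and the functional form of $w$ for which the inequality holds uniformly over all frontier states, and (ii) making the two constants as close together as the scheme allows --- the values $\tfrac{31}{16}=\tfrac{2^{5}-1}{2^{4}}$ and $\tfrac{17}{10}$ being precisely what emerges from (essentially) optimizing this trade‑off. A secondary but delicate point is the bookkeeping of which lattice point is completed by which tile, so that the ``forced versus free'' count --- and hence the clean doubling on the $m+n-3$ boundary tiles --- is actually correct at the corners and along the first bar.
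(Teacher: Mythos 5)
There is a genuine gap. Your skeleton matches the paper's: both build the configuration tile by tile, both isolate $m+n-3$ first-row/first-column tiles that each contribute an exact factor of $2$ and $(m-2)(n-2)$ ``interior'' tiles whose per-tile growth factor must be trapped in $[\tfrac{17}{10},\tfrac{31}{16}]$. But everything that makes the theorem true is in the step you explicitly defer as ``the hard part'': you never construct the weight $w$, never fix the window, never verify a single sub- or super-invariance inequality, and you offer no mechanism from which the constants $\tfrac{17}{10}$ and $\tfrac{31}{16}$ actually emerge --- you only assert that they are what optimization of an unspecified scheme would produce. In the paper these constants are not the output of an optimization at all: they come from an exact finite enumeration. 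Each tile admits exactly \emph{one or two} continuations (one precisely when both its left and top edges already carry connection points), so the growth ratio is $2-\Pr[\text{forced}]$; the paper conditions on a fixed bounded window of previously placed tiles (the $l$- and $t$-cling mosaics), enumerates all window configurations via small explicit matrices, and shows the conditional probability of the forced case lies in $[\tfrac{1}{16},\tfrac{3}{10}]=[\tfrac14\cdot\tfrac14,\ \tfrac12\cdot\tfrac35]$, whence $2-\tfrac{3}{10}=\tfrac{17}{10}$ and $2-\tfrac{1}{16}=\tfrac{31}{16}$. Without carrying out some such finite computation, your argument proves nothing beyond the trivial $1\le r_{i,j}\le 2$.

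Two further points. First, your claim that an interior tile's multiplier is ``$0$ or $2$'' is wrong in the relevant model: every pattern of connection points on the left and top edges is realizable by at least one tile, so the multiplier is $1$ or $2$, never $0$; this matters because the entire estimate is a bound on the frequency of the multiplier-$1$ (forced) case, not on a survival probability. Second, your identity $p_{m\times n}+1=2^{m+n-3}I_{m,n}$ with $I_{m,n}\ge\bigl(\tfrac{17}{10}\bigr)^{(m-2)(n-2)}$ would only yield the lower bound with a ``$-1$''; removing it requires the slightly sharper case-by-case bounds that the paper establishes first (its Lemma on $p_{3\times n}$, $p_{4\times n}$, and $m\ge5$) before loosening them, and your sketch does not account for this.
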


Note that various types of single self-avoiding walks in a confined square lattice were investigated in~\cite{BGJ},
particularly a class of self-avoiding walks that start at the origin $(0,0)$, end at $(n,n)$,
and are entirely contained in the square $[0,n] \times [0,n]$ on $\mathbb{Z}^2$.
The number of distinct walks is known to grow as $\lambda^{n^2 + o(n^2)}$.
They estimate $\lambda = 1.744550 \pm 0.000005$
as well as obtain strict upper and lower bounds, $1.628 < \lambda < 1.782$.
In our model, 
$$1.7 \leq \lim_{n \rightarrow \infty} (p_{n \times n})^{1/n^2} \leq 1.9375,$$ 
provided the limit exists.

\section{Adjusting to the mosaic system}

A mosaic system is introduced by Lomonaco and Kauffman~\cite{LK} to give
a precise and workable definition of quantum knots.
This definition is intended to represent an actual physical quantum system.
The definition of quantum knots was based on the planar projections of knots and the Reidemeister moves.
They model the topological information in a knot by a state vector in a Hilbert space
that is directly constructed from knot mosaics.
Recently Hong, Lee, Lee and Oh announced several results on the enumeration
of various types of knot mosaics in the confined mosaic system in the series of papers
\cite{HLLO1, HLLO2, LHLO, OHLL}.

We begin by explaining the basic notion of mosaics modified for polygons in $\mathbb{Z}_{m \times n}$.
The following seven symbols are called {\em mosaic tiles\/} (for polygons).
In the original definition in mosaic theory, there are eleven types of mosaic tiles
allowing four more mosaic tiles with two arcs.

\begin{figure}[h]
\includegraphics{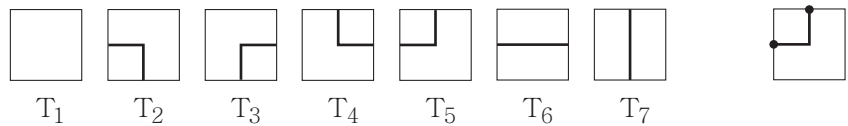}
\caption{Seven mosaic tiles modified for polygons and connection points in a mosaic tile.}
\label{fig2}
\end{figure}

For positive integers $m$ and $n$,
an {\em $(m,n)$-mosaic\/} is an $m \! \times \! n$ matrix $M=(M_{ij})$ of mosaic tiles.
The trivial mosaic is a mosaic whose entries are all $T_1$.
A {\em connection point\/} of a mosaic tile is defined as the midpoint of a tile edge
that is also the endpoint of a portion of graph drawn on the tile as shown in the rightmost tile in Figure~\ref{fig2}.
Note that $T_1$ has no connection point and each of the six mosaic tiles $T_2$ through $T_7$ have two.
A mosaic is called {\em suitably connected\/} if any pair of mosaic tiles
lying immediately next to each other in either the same row or the same column
have or do not have connection points simultaneously on their common edge.
A {\em polygon $(m,n)$-mosaic\/} is a suitably connected $(m,n)$-mosaic
that has no connection point on the boundary edges.
Examples in Figure~\ref{fig3} are a non-polygon $(4,4)$-mosaic and a polygon $(4,4)$-mosaic.

\begin{figure}[h]
\includegraphics{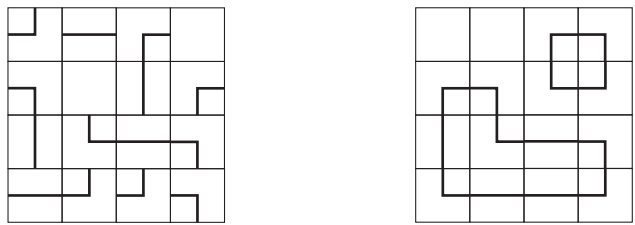}
\caption{Examples of a non-polygon $(4,4)$-mosaic and a polygon $(4,4)$-mosaic.}
\label{fig3}
\end{figure}

As drawn by solid line segments in Figure~\ref{fig1},
we can consider a MSAP as a polygon $(m,n)$-mosaic by shifting
the rectangular grid $\mathbb{Z}_{(m+1) \times (n+1)}$ horizontally and vertically by $-\frac{1}{2}$.
In the mosaic system, polygons transpass unit length edges of the mosaic system
and run through the centers of unit squares.
The following one-to-one conversion arises naturally. \\

\noindent {\bf One-to-one conversion\/}
There is a one-to-one correspondence between  
MSAPs in $\mathbb{Z}_{m \times n}$ and polygon $(m,n)$-mosaics, except for the trivial mosaic. \\

Note that the trivial mosaic contains no graph, so is not counted in $p_{m \times n}$.

\section{Quasimosaics and growth ratios}

In this section, we define a modified version of quasimosaics, which were introduced in~\cite{HLLO2},
and their growth ratios.
We arrange all mosaic tiles as a sequence such that
their pair-indices of tiles are ordered as $(1,1)$, $(1,2)$, $(2,1)$, $(1,3)$, $(2,2)$, $(3,1)$, etc.,
and finished at $(m,n)$.
More precisely, the pair-index $(i,j)$ follows $(i-1,j+1)$ if $i > 1$ and $j < n$,
or otherwise, either $(i+j-2,1)$ for $i+j-2 \leq m$ or $(m,i+j-m-1)$ for $i+j-2 > m$.
Let $a(i,j)$ denote the predecessor of the pair-index $(i,j)$ in the sequence.

An $(i,j)$-{\em quasimosaic\/} is a portion of a polygon $(m,n)$-mosaic obtained by taking
all mosaic tiles $M_{1,1}$ through $M_{i,j}$ in the sequence as drawn in Figure~\ref{fig4}.
Note that a quasimosaic is also suitably connected.
Its $(i,j)$-entry $M_{i,j}$ is called the {\em leading mosaic tile\/} of the $(i,j)$-quasimosaic.
Furthermore we define two kinds of cling mosaics of the $(i,j)$-quasimosaic.
An {\em $l$-cling mosaic\/} for $M_{i,j}$ is a submosaic consisting of three or fewer mosaic tiles
$M_{i,j-2}$, $M_{i,j-1}$ and $M_{i+1,j-2}$ (they may not exist when $j = 1$ or 2).
And a {\em $t$-cling mosaic\/} is a submosaic consisting of five or fewer mosaic tiles
$M_{i-2,j}$, $M_{i-2,j+1}$, $M_{i-2,j+2}$, $M_{i-1,j}$ and $M_{i-1,j+1}$.
The letters $l$- and $t$- mean the left and the top, respectively.
The leftmost and the top boundary edges of cling mosaics
that are not contained in the boundary edges of the mosaic system are called {\em contact edges.\/}

\begin{figure}[h]
\includegraphics{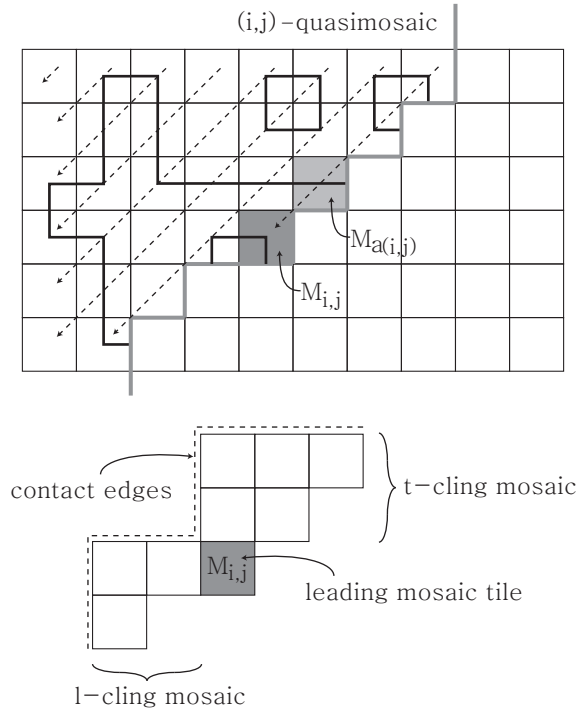}
\caption{A $(4,5)$-quasimosaic and two cling mosaics.}
\label{fig4}
\end{figure}

Let $Q_{i,j}$ denote the set of all possible $(i,j)$-quasimosaics.
By definition, $Q_{m,n}$ is the set of all polygon $(m,n)$-mosaics.
It is an exercise for the reader to show that 
$|Q_{1,1}|=2$, $|Q_{1,2}|=4$, $|Q_{2,1}|=8$, $|Q_{1,3}|=16$, $|Q_{2,2}|=28$ 
and $|Q_{3,1}|=56$, provided that $m,n \geq 4$.
We will construct $Q_{m,n}$ from $Q_{1,1}$ by adding leading mosaic tiles inductively.
Focus on the ratios of growth of the number of sets at each step.
Define a {\em growth ratio\/} $r_{i,j}$ of the set $Q_{i,j}$ over $Q_{a(i,j)}$ as 
$$r_{i,j} = \frac{|Q_{i,j}|}{|Q_{a(i,j)}|},$$
with the assumption that $|Q_{a(1,1)}|=1$.
Thus $r_{1,1}=2$, $r_{1,2}=2$, $r_{2,1}=2$, $r_{1,3}=2$, $r_{2,2}=\frac{7}{4}$, and $r_{3,1}=2$.
By definition,
\begin{equation} \label{eq:1}
p_{m \times n} = |Q_{m,n}| -1 = \prod_{i,j} r_{i,j} -1.
\end{equation}

For simplicity of exposition, a mosaic tile is called {\em $l$-cp\/} if it has a connection point
on its left edge,
and, similarly, {\em $t$, $r$,\/} or {\em $b$-cp\/} when on its top, right, or bottom edge, respectively.
Sometimes we use two letters, for example, $lt$-cp in the case of both $l$-cp and $t$-cp.
Also, we use the sign $\hat{}$ \/ for negation so that, for example,
$\hat{t}$-cp means not $t$-cp,
$\hat{l} \hat{t}$-cp means both $\hat{l}$-cp and $\hat{t}$-cp, and
$\widehat{lt}$-cp (which is differ from $\hat{l} \hat{t}$-cp) means not $lt$-cp,
i.e., $\hat{l} t$, $l \hat{t}$, or $\hat{l} \hat{t}$-cp.

\begin{lemma} \label{lem:choice}
For positive integers $i,j$, $M_{ij}$ is either $T_1$ or $T_3$ if it is $\hat{l} \hat{t}$-cp,
either $T_2$ or $T_6$ if $l \hat{t}$-cp,
either $T_4$ or $T_7$ if $\hat{l} t$-cp, and
$T_5$ if $lt$-cp.
Therefore, each $M_{ij}$ has two choices of mosaic tiles if it is $\widehat{lt}$-cp,
and the unique choice if it is $lt$-cp.
\end{lemma}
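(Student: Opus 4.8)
The plan is to prove the lemma by direct inspection of the seven mosaic tiles, organized as a four-way case split on the connection-point data of $M_{ij}$ along its left and top edges. The one structural fact that drives the argument is this: among $T_1,\dots,T_7$, the tile $T_1$ is the unique one with no connection point, while each of $T_2,\dots,T_7$ has exactly two connection points, each sitting at the midpoint of one of the four edges $\{l,t,r,b\}$. Since an unordered pair of distinct edges can be chosen in exactly $\binom{4}{2}=6$ ways and there are exactly six nontrivial tiles, the tiles $T_2,\dots,T_7$ are in bijection with the six edge-pairs; comparing with Figure~\ref{fig2}, this bijection sends $T_3$ to $\{r,b\}$, the pair $\{T_2,T_6\}$ to $\{\{l,r\},\{l,b\}\}$, the pair $\{T_4,T_7\}$ to $\{\{t,r\},\{t,b\}\}$, and $T_5$ to $\{l,t\}$.

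Next I would run through the four cases. If $M_{ij}$ is $\hat{l}\hat{t}$-cp, then either it has no connection point, so $M_{ij}=T_1$, or it has two, which must then form the unique edge-pair meeting neither $l$ nor $t$, namely $\{r,b\}$, so $M_{ij}=T_3$. If $M_{ij}$ is $l\hat{t}$-cp, it has a connection point on $l$ but none on $t$, so its second connection point lies on $r$ or on $b$; the resulting edge-pairs $\{l,r\}$ and $\{l,b\}$ correspond to $M_{ij}\in\{T_2,T_6\}$. The case $\hat{l}t$-cp is handled symmetrically and gives $M_{ij}\in\{T_4,T_7\}$. If $M_{ij}$ is $lt$-cp, its two connection points are exactly those on $l$ and on $t$, so $M_{ij}=T_5$. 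Since $\widehat{lt}$-cp is by definition the union of the first three cases, each of which offers exactly two tiles, while the last case offers a single tile, the concluding sentence of the lemma follows immediately.

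I do not expect any genuine obstacle here: the lemma is a bookkeeping restatement of the tile definitions, with no combinatorial content beyond the count $\binom{4}{2}=6$ and the observation that $\widehat{lt}$-cp decomposes into the three cases $\hat{l}\hat{t}$, $l\hat{t}$, and $\hat{l}t$. The only point requiring care is matching the labels $T_1,\dots,T_7$ as drawn in Figure~\ref{fig2} with the edge-pairs listed in each case, so that the tile names appearing in the statement are the correct ones; this is purely a matter of reading the figure.
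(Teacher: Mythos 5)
Your proposal is correct: the paper states Lemma~\ref{lem:choice} without any proof, treating it as immediate from the definition of the seven tiles in Figure~\ref{fig2}, and your argument (the blank tile $T_1$ plus the bijection between the six nontrivial tiles and the $\binom{4}{2}=6$ edge-pairs, followed by the four-way case split) is exactly the verification the authors leave implicit. Nothing is missing; the only care needed is, as you note, matching the labels $T_2,\dots,T_7$ to the edge-pairs as drawn in the figure.
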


Remark that we easily find rough bounds of $r_{i,j}$.
Each $a(i,j)$-quasimosaic in $Q_{a(i,j)}$ can be extended to
either one or two $(i,j)$-quasimosaics in $Q_{i,j}$ by choosing
the leading mosaic tile $M_{i,j}$ being suitably connected according to Lemma~\ref{lem:choice}.
Thus, $|Q_{a(i,j)}| \leq |Q_{i,j}| \leq 2 |Q_{a(i,j)}|$,
and so we have rough bounds of the growth ratio:
$$1 \leq r_{i,j} \leq 2.$$

\section{Investment of cling mosaics and cp-ratios}

We can mark at a mosaic tile edge on a cling mosaic with an `x'
if it does not have a connection point and with an `o' if it has.
Sometimes we use a sequence of x's and o's to mark several edges together, like $e_1 e_2 =$ xo,
which means that the edge $e_1$ does not have a connection point but the edge $e_2$ does.

Now we classify all $l$-cling mosaics into five types $U_1 \sim U_5$,
and all $t$-cling mosaics into eight types $V_1 \sim V_8$ as drawn in Figure~\ref{fig5}.
In each type, the bold edges $e_l$ and $e_t$ indicate the left and the top edges
of the leading mosaic tile, respectively;
the edges $e_i$'s indicate the contact edges,
and the edges marked by x lie in the boundary of the mosaic system
(so these have no connection point).
Note that the mosaic types other than $U_1$ and $V_1$ arise
when the leading mosaic tile is near the boundary of the mosaic system.

\begin{figure}[h]
\includegraphics{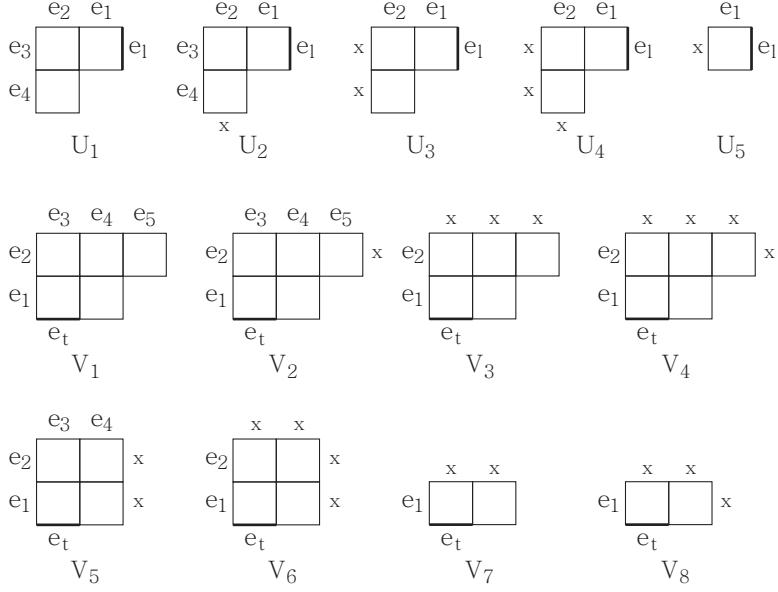}
\caption{Five types of $l$-cling mosaics and eight types of $t$-cling mosaics.}
\label{fig5}
\end{figure}

Now we define cp-ratios for each type of cling mosaics as follows.
We say that the associated contact edges $e_i$'s are {\em given\/}
if the presence of connection points of them are given.
For a type $U_k$ and given $e_i$'s, we define
$$ \mbox{{\em cp-ratio\/} of $U_k$} =
\frac{|\{ \mbox{type } U_k \mbox{ cling mosaics with the given } e_i \mbox{'s and } e_l = \mbox{o} \}|}
{|\{ \mbox{type } U_k \mbox{ cling mosaics with the given } e_i \mbox{'s and any } e_l \}|}.$$
And $u_k$ denotes the pair of the minimum and the maximum among all cp-ratios for the type $U_k$ 
that occur in any given $e_i$'s.
Similarly define the pair $v_{k'}$ for the type $V_{k'}$.

\begin{lemma} \label{lem:cp-ratio}
The pairs of cp-ratios for the thirteen types of cling mosaics are as follows:
$u_1 = \{ \frac{1}{4}, \frac{1}{2} \}$,
$u_2 = u_3 = u_4 = v_5 = v_6 = \{ \frac{1}{3}, \frac{1}{2} \}$,
$v_1 = \{ \frac{1}{4}, \frac{3}{5} \}$,
$v_2 = \{ \frac{1}{4}, \frac{4}{7} \}$,
$v_3 = v_4 = \{ \frac{4}{11}, \frac{1}{2} \}$, and
$u_5 = v_7 = v_8 = \{ \frac{1}{2}, \frac{1}{2} \}.$
\end{lemma}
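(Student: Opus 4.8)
To establish the thirteen pairs of cp-ratios, the plan is to turn each one into a finite enumeration of the suitably connected fillings of a single cling mosaic, with Lemma~\ref{lem:choice} as the only structural input. The starting observation is that whether the left edge $e_l$ of the leading tile $M_{i,j}$ carries a connection point depends only on the type of the tile $M_{i,j-1}$ immediately to its left, and that, by Lemma~\ref{lem:choice}, this type is cut down to at most two possibilities once we know whether $M_{i,j-1}$ is $l$-cp and $t$-cp --- equivalently, once we know the marks on the right edge of $M_{i,j-2}$ and on the (contact) top edge of $M_{i,j-1}$. Propagating this back through the two or three tiles of the $l$-cling mosaic rewrites the status of $e_l$ purely in terms of the contact edges $e_i$; since the cp-ratio, by definition, only ever refers to the cling mosaic and its contact edges, nothing outside the cling mosaic is needed. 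The symmetric statement holds for $e_t$ and the (five-tile) $t$-cling mosaic, whose tiles sit in the two rows above and to the upper right of the leading tile.

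Concretely, fix a type $U_k$ (resp.\ $V_{k'}$) and an assignment of the marks o and x to its contact edges $e_i$. Visit the tiles of the cling mosaic in the order in which their pair-indices occur in the sequence used to construct quasimosaics; when a tile is reached, the marks on its left and on its top edge are already known, each being either one of the prescribed $e_i$, an edge lying on the boundary of the mosaic system (hence x), or an edge shared with a previously visited tile. By Lemma~\ref{lem:choice} the tile then admits exactly one or exactly two mosaic tiles, and each admissible one determines the marks on the remaining two edges, which must agree with whatever contact or boundary edge lies there. This branching produces the complete list of valid fillings of the cling mosaic compatible with the chosen $e_i$'s: the length of the list is the denominator, and the number of fillings in which $e_l$ (resp.\ $e_t$) is o is the numerator, of the cp-ratio for that assignment. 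Taking the minimum and the maximum of this ratio over the assignments of the $e_i$'s that admit at least one valid filling gives $u_k$ (resp.\ $v_{k'}$). The equalities $u_5=v_7=v_8=\{\tfrac12,\tfrac12\}$ come out in one line: for these three types the tile whose type decides the relevant edge abuts the boundary of the mosaic system so that one of its two input edges is forced to be x, and then --- whatever mark the other input edge carries --- Lemma~\ref{lem:choice} leaves exactly two admissible mosaic tiles, precisely one of which carries a connection point on the output edge.

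What remains is a finite but bulky bookkeeping computation, and that bulk, together with the precision needed for the boundary types $U_2$ through $U_5$ and $V_2$ through $V_8$, is where the real difficulty lies: for each of the thirteen types one must correctly read off which tile edges are contact edges and which lie on the boundary of the mosaic system, run through all o/x patterns on the contact edges, discard the patterns for which the branching yields no valid filling, and then count, for each surviving pattern, the valid fillings and the fillings with $e_l$ (resp.\ $e_t$) equal to o --- most naturally laid out as thirteen short tables. A convenient consistency check comes for free: one expects the generic pair $(U_1,V_1)$ to govern the growth ratios of interior leading tiles, and indeed $2-\tfrac12\cdot\tfrac35=\tfrac{17}{10}$ and $2-\tfrac14\cdot\tfrac14=\tfrac{31}{16}$ reproduce exactly the bases $17/10$ and $31/16$ appearing in Theorem~\ref{thm:polygon}.
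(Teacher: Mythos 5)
Your strategy is exactly the one the paper uses: reduce each cp-ratio to a finite enumeration of suitably connected fillings of a single cling mosaic, with Lemma~\ref{lem:choice} supplying the ``one choice if $lt$-cp, two choices otherwise, and of the two exactly one is $r$-cp (resp.\ $b$-cp)'' branching rule, and then take the minimum and maximum over the admissible o/x patterns on the contact edges. Your observation that the status of $e_l$ (resp.\ $e_t$) is decided entirely inside the cling mosaic, and your one-line disposal of $u_5=v_7=v_8=\{\tfrac12,\tfrac12\}$, are both sound, and the consistency check against the bases $\tfrac{17}{10}$ and $\tfrac{31}{16}$ of Theorem~\ref{thm:polygon} is a nice touch.

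The problem is that you stop exactly where the lemma begins. This lemma has no conceptual content beyond the thirteen numerical pairs, and the values that actually drive the main theorem --- $\tfrac14$ and $\tfrac35$ for $v_1$, as well as $\tfrac47$ for $v_2$ and $\tfrac4{11}$ for $v_3, v_4$ --- are precisely the ones your proposal declares to be ``bulky bookkeeping'' and leaves unperformed. A proof that describes a terminating algorithm and asserts its output without running it does not establish the output; if any one of these extrema were off (and the $t$-cling mosaics have up to seven contact edges, so $V_1$ alone requires examining $2^7$ boundary patterns, some of which admit no filling and must be discarded), the bases in Theorem~\ref{thm:polygon} would change. The paper does carry out this enumeration, and does so by a layered bookkeeping device worth noting: it first tabulates a three-tile block $W$ via four $4\times4$ matrices $N_{c_1c_2}$ indexed by the o/x patterns on its edge pairs, derives $u_1,\dots,u_4$ as entrywise quotients of (sub)matrices such as $N_{\mathrm{o}*}/N_{**}$, and then builds the five-tile types $V_1,\dots,V_4$ by composing $W$ with a second block $W'$, so that the rows of the $4\times8$ matrices $N^{(k)}_{e_t}$ are sums of rows of the already-computed $N_{**}$. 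To complete your proof you would need either to reproduce a computation of this kind or to exhibit your ``thirteen short tables'' explicitly; as it stands the ten nontrivial pairs are unverified.
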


\begin{proof}
First consider a submosaic $W$ consisting of three mosaic tiles $M_1$, $M_2$, and $M_3$
as drawn in the center of Figure~\ref{fig6}.
Each of $e_1 e_2$ and $e_3 e_4$ has four choices of the presence of connection points
among xx, xo, ox and oo.
Define $4 \times 4$ matrices $N_{c_1 c_2} = (n_{ij})$,
where $n_{ij}$ is the number of all possible suitably connected submosaics $W$
with the given $c_1 c_2$, the $i$-th $e_1 e_2$ and the $j$-th $e_3 e_4$
in the order of xx, xo, ox, and oo.
Then
{\footnotesize
$$N_{\mbox{xx}} = \begin{bmatrix}
2 & 2 & 2 & 2 \\
2 & 2 & 1 & 1 \\
2 & 2 & 2 & 2 \\
2 & 2 & 1 & 1
\end{bmatrix}, \
N_{\mbox{xo}} = \begin{bmatrix}
2 & 1 & 2 & 1 \\
2 & 1 & 1 & 1 \\
2 & 1 & 2 & 1 \\
2 & 1 & 1 & 1
\end{bmatrix},$$
$$N_{\mbox{ox}} = \begin{bmatrix}
2 & 2 & 2 & 2 \\
2 & 2 & 1 & 1 \\
1 & 1 & 1 & 1 \\
1 & 1 & 1 & 1
\end{bmatrix} \ \mbox{and} \ \
N_{\mbox{oo}} = \begin{bmatrix}
2 & 1 & 2 & 1 \\
2 & 1 & 1 & 1 \\
1 & 1 & 1 & 0 \\
1 & 0 & 1 & 1
\end{bmatrix}.$$
}

These four matrices can be obtained from the following two rules.
The first is that if $e_2 e_3$ is oo, then $M_3$ is $lt$-cp,
so it is uniquely determined by Lemma~\ref{lem:choice} and it must be $\hat{r}\hat{b}$-cp.
And if $e_2 e_3$ is not oo, then $M_3$ is $\widehat{lt}$-cp,
so it has two choices of mosaic tiles for given $e_2 e_3$,
one of which is $\hat{r}$-cp and the other is $r$-cp (similarly for $b$-cp).
The second rule is that, after $M_3$ is determined,
if $M_3$ is $\hat{r}$-cp, then $M_1$ is uniquely determined for given $c_1 e_1$.
And if $M_3$ is $r$-cp, then $M_1$ is uniquely determined when $c_1 e_1$ is not oo,
but there is no choice for $M_1$ when $c_1 e_1$ is oo.
The second rule can be applied to $M_2$ with $c_2 e_4$ in the same manner.

\begin{figure}[h]
\includegraphics{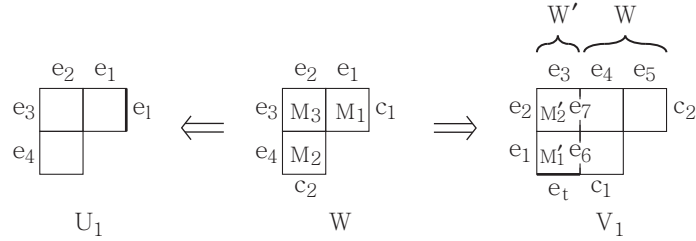}
\caption{Submosaic $W$ and modifying $W$ to $U_1$ and $V_1$.}
\label{fig6}
\end{figure}

For same sized matrices $A$ and $B$,
$\{ \frac{A}{B} \}$ denotes the pair consisting of the minimum and the maximum among all entries of the matrix obtained from dividing $A$ by $B$ entry-wise.
From now on, the mark~$*$ is used when we consider both x and o.
For examples, $N_{\mbox{o}*} = N_{\mbox{ox}} + N_{\mbox{oo}}$.

For the types $U_1$ through $U_4$, we use $W$ after identifying $c_1 = e_l$.
Each entry of $N_{\mbox{o}*}$ indicates the number of all possible
type $U_1$ cling mosaics with given $e_i$'s and $e_l=$ o,
and $N_{**}$ the number of type $U_1$ cling mosaics with given $e_i$'s and any $e_l$.
Note that there is no restriction on $c_2$.
Thus each entry of the matrix obtained from dividing $N_{\mbox{o}*}$ by $N_{**}$ entry-wise
is the cp-ratio for given $e_i$'s.
Now $u_1$ is the pair of the minimum and the maximum among all entries of this matrix.
Thus $u_1 = \{ \frac{N_{\mbox{\footnotesize{o}}*}}{N_{**}} \} =
\{ \frac{1}{4}, \frac{1}{2} \}$.
$u_2$ can be obtained by merely changing $N_{\mbox{o}*}$ and $N_{**}$
by $N_{\mbox{ox}}$ and $N_{*\mbox{x}}$, respectively,
because $c_2 =$ x.
Thus
$u_2 = \{ \frac{N_{\mbox{\footnotesize{ox}}}}{N_{*\mbox{\footnotesize{x}}}} \} = \{ \frac{1}{3}, \frac{1}{2} \}$.

The restriction $e_3 e_4 =$ xx for the types $U_3$ and $U_4$ is related to
only the first columns of the associated matrices.
The rest of the proof is similar to the previous case.
Thus,
$$ u_3 = \{ \frac{1\mbox{\footnotesize{st column of} } N_{\mbox{\footnotesize{o}}*}}
{1\mbox{\footnotesize{st column of} } N_{**}} \} = \{ \frac{1}{3}, \frac{1}{2} \}  \text{ and }
u_4 = \{ \frac{1\mbox{\footnotesize{st column of} } N_{\mbox{\footnotesize{ox}}}}
{1\mbox{\footnotesize{st column of} } N_{*\mbox{\footnotesize{x}}}} \} = \{ \frac{1}{3}, \frac{1}{2} \}. $$

For the types $V_1$ through $V_4$, we use $W$ again
after identifying $e_1$, $e_2$, $e_3$, and $e_4$ of $W$ with $e_6$, $e_7$, $e_4$, and $e_5$ of $V_i$'s,
respectively, combined with another submosaic $W'$ as shown in Figure~\ref{fig6}.
Define two $4 \times 8$ matrices $N^{(1)}_{e_t} = (n_{ij})$, for $e_t = $ x or o,
where $n_{ij}$ is the number of all possible submosaics $V_1$ with the given $e_t$, the $i$-th  $e_1 e_2$
and the $j$-th $e_3 e_4 e_5$ in the reverse dictionary order as before.
In the following matrices,
``$x$-th row" and ``$x \! + \! y$-th rows" mean
the $x$-th row of the previously obtained matrix $N_{**}$ and
the sum of the $x$-th row and the $y$-th row of $N_{**}$, respectively.
Then
{\footnotesize
$$N^{(1)}_{\mbox{x}} = \begin{bmatrix}
1 \! + \! 4\mbox{th rows} & 2 \! + \! 3\mbox{rd rows} \\
2 \! + \! 3\mbox{rd rows} & 1\mbox{st row} \\
2 \! + \! 3\mbox{rd rows} & 1 \! + \! 4\mbox{th rows} \\
1 \! + \! 4\mbox{th rows} & 3\mbox{rd row}
\end{bmatrix}
= \begin{bmatrix}
14 & 10 & 12 & 10 & 14 & 11 & 10 &  8 \\
14 & 11 & 10 &  8 &  8 &  6 &  8 &  6 \\
14 & 11 & 10 &  8 & 14 & 10 & 12 & 10 \\
14 & 10 & 12 & 10 &  6 &  5 &  6 &  4
\end{bmatrix}, $$
$$N^{(1)}_{\mbox{o}} = \begin{bmatrix}
2+3\mbox{rd rows} & 1+4\mbox{th rows} \\
1+4\mbox{th rows} & 3\mbox{rd row} \\
1\mbox{st row} & 2\mbox{nd row} \\
2\mbox{nd row} & 1\mbox{st row}
\end{bmatrix}
= \begin{bmatrix}
14 & 11 & 10 &  8 & 14 & 10 & 12 & 10 \\
14 & 10 & 12 & 10 &  6 &  5 &  6 &  4 \\
 8 &  6 &  8 &  6 &  8 &  6 &  4 &  4 \\
 8 &  6 &  4 &  4 &  8 &  6 &  8 &  6
\end{bmatrix}. $$
}

For example, we will compute the second row of $N^{(1)}_{\mbox{x}}$,
and the reader can find the remaining rows in the same manner.
For this case, $e_t=$ x, $e_1 e_2=$ xo,
the left four entries of this row are related to $e_3=$ x,
and the right four entries are related to $e_3=$ o.
If $e_3=$ x, then the pair $M'_1$ and $M'_2$ of $W'$
has two choices, such as $M'_1 = T_1$ and $M'_2 = T_6$, or $M'_1 = T_4$ and $M'_2 = T_2$.
Therefore $e_6 e_7$ must be xo or ox, respectively.
These two cases are related to the second and the third rows of $N_{**}$, respectively.
Thus the numbers of all possible such $W$ for each $e_4 e_5$ are represented by the sum of these two rows.
If $e_3=$ o, then this pair has unique choice of $M'_1 = T_1$ and $M'_2 = T_5$,
and so $e_6 e_7$ must be xx.
It is related to the first row of $N_{**}$, which represents the numbers of all such $W$ for each $e_4 e_5$.
Each entry of $N^{(1)}_{\mbox{o}}$ indicates the number of all possible
type $V_1$ $t$-cling mosaics with given $e_i$'s and $e_t=$ o,
and $N^{(1)}_{*}$ the number of type $V_1$ $t$-cling mosaics with given $e_i$'s and any $e_t$.
Now we get the cp-ratio for given $e_i$'s in the same way as previous.
Thus, 
$$ v_1 = \{ \frac{N^{(1)}_{\mbox{\footnotesize{o}}}}{N^{(1)}_{*}} \} = \{ \frac{1}{4}, \frac{3}{5} \}. $$

For $V_2$, define other two $4 \times 8$ matrices $N^{(2)}_{e_t}$, for $e_t = $ x or o.
$N^{(2)}_{\mbox{x}}$ and $N^{(2)}_{\mbox{o}}$ are obtained in the same manner as
computing $N^{(1)}_{\mbox{x}}$ and $N^{(1)}_{\mbox{o}}$ after replacing $N_{**}$ by $N_{*\mbox{x}}$,
since $c_2=$ x.
Then
{\footnotesize
$$N^{(2)}_{\mbox{x}} = \begin{bmatrix}
7 & 7 & 6 & 6 & 7 & 7 & 5 & 5 \\
7 & 7 & 5 & 5 & 4 & 4 & 4 & 4 \\
7 & 7 & 5 & 5 & 7 & 7 & 6 & 6 \\
7 & 7 & 6 & 6 & 3 & 3 & 3 & 3
\end{bmatrix} \mbox{ and }
N^{(2)}_{\mbox{o}} = \begin{bmatrix}
7 & 7 & 5 & 5 & 7 & 7 & 6 & 6 \\
7 & 7 & 6 & 6 & 3 & 3 & 3 & 3 \\
4 & 4 & 4 & 4 & 4 & 4 & 2 & 2 \\
4 & 4 & 2 & 2 & 4 & 4 & 4 & 4
\end{bmatrix}.$$
}

Then $v_2$ can be obtained from merely changing $N^{(1)}_{\mbox{o}}$ and $N^{(1)}_{*}$
by $N^{(2)}_{\mbox{o}}$ and $N^{(2)}_{*}$, respectively.
Thus, 
$$ v_2 = \{ \frac{N^{(2)}_{\mbox{\footnotesize{o}}}}{N^{(2)}_{*}} \} = \{ \frac{1}{4}, \frac{4}{7} \}. $$

The restriction $e_3 e_4 e_5=$ xxx for the types $V_3$ and $V_4$ is related to
only the first columns of the associated matrices.
Thus, 
$$ v_3 = \{ \frac{1\mbox{\footnotesize{st column of} } N^{(1)}_{\mbox{\footnotesize{o}}}}
{1\mbox{\footnotesize{st column of} } N^{(1)}_{*}} \} = \{ \frac{4}{11}, \frac{1}{2} \} \text{ and }
v_4 = \{ \frac{1\mbox{\footnotesize{st column of} } N^{(2)}_{\mbox{\footnotesize{o}}}}
{1\mbox{\footnotesize{st column of} } N^{(2)}_{*}} \} = \{ \frac{4}{11}, \frac{1}{2} \}. $$

Consider the types $V_5$ and $V_6$.
Define two $4 \times 4$ matrices $N^{(3)}_{e_t} = (n_{ij})$, for $e_t = $ x or o,
where $n_{ij}$ is the number of all possible submosaics $V_5$ with the given $e_t$, the $i$-th  $e_1 e_2$
and the $j$-th $e_3 e_4$.
Using the same manner of computing the associated matrices at the beginning of the proof,
the reader can find the matrices $N^{(3)}_{\mbox{x}}$ and $N^{(3)}_{\mbox{o}}$ as follows:
{\footnotesize
$$N^{(3)}_{\mbox{x}} = \begin{bmatrix}
2 & 2 & 2 & 2 \\
2 & 2 & 1 & 1 \\
2 & 2 & 2 & 2 \\
2 & 2 & 1 & 1
\end{bmatrix} \mbox{ and }  \
N^{(3)}_{\mbox{o}} = \begin{bmatrix}
2 & 2 & 2 & 2 \\
2 & 2 & 1 & 1 \\
1 & 1 & 1 & 1 \\
1 & 1 & 1 & 1
\end{bmatrix}.$$
}
From the same calculation as before,
$$ v_5 = \{ \frac{N^{(3)}_{\mbox{\footnotesize{o}}}}{N^{(3)}_{*}} \} =
\{ \frac{1}{3}, \frac{1}{2} \} \text{ and }
v_6 = \{ \frac{1\mbox{\footnotesize{st column of} } N^{(3)}_{\mbox{\footnotesize{o}}}}
{1\mbox{\footnotesize{st column of} } N^{(3)}_{*}} \} = \{ \frac{1}{3}, \frac{1}{2} \}. $$

For the remaining types, $u_5$, $v_7$, and $v_8$ are obtained by counting directly
for each case of $e_1 =$  x or o, as $u_5 = v_7 = v_8 = \{ \frac{1}{2}, \frac{1}{2} \}$.
\end{proof}

\section{Proof of Theorem~\ref{thm:polygon}}

We will compute lower and upper bounds of the growth ratio at each leading mosaic tile
by using the cp-ratios of the associated cling mosaics.
Let $M_{i,j}$ be a leading mosaic tile with the associated $l$- and $t$-cling mosaics $U_k$ and $V_{k'}$.
Let $S_{kk'}$ and $L_{kk'}$ denote the multiplication of the smallest (resp. largest)
elements of $u_k$ and $v_{k'}$.

\begin{lemma} \label{lem:ratio}
For $i \neq 1,m$ and $j \neq 1,n$,
$2 - L_{kk'} \leq r_{ij} \leq 2 - S_{kk'}$.
\end{lemma}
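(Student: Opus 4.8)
The plan is to express $r_{ij}$ as $2$ minus the proportion of $a(i,j)$-quasimosaics in which the leading tile is forced to be $lt$-cp, and then to bound that proportion by splitting it into an $l$-part and a $t$-part, each controlled by a cp-ratio from Lemma~\ref{lem:cp-ratio}.

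First I would record, using Lemma~\ref{lem:choice}, that an $a(i,j)$-quasimosaic $q$ extends to exactly one $(i,j)$-quasimosaic if $M_{ij}$ is $lt$-cp in $q$ and to exactly two otherwise; here $M_{ij}$ is $lt$-cp precisely when its left edge $e_l$ and its top edge $e_t$ both carry a connection point, and because $i\neq 1,m$ and $j\neq 1,n$ these are interior edges, shared respectively with the already-placed tiles $M_{i,j-1}$ and $M_{i-1,j}$. Writing $Q=Q_{a(i,j)}$, this yields
\[
r_{ij}=\frac{|Q_{i,j}|}{|Q|}=2-\frac{\bigl|\{\,q\in Q:\ e_l=\mathrm{o}\text{ and }e_t=\mathrm{o}\text{ in }q\,\}\bigr|}{|Q|},
\]
so it suffices to show the fraction on the right lies in $[S_{kk'},L_{kk'}]$.

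Next I would decompose each $q\in Q$ into its $l$-cling submosaic $\ell(q)$ (carried by $M_{i,j-2},M_{i,j-1},M_{i+1,j-2}$), its $t$-cling submosaic $\tau(q)$ (carried by $M_{i-2,j},\dots,M_{i-1,j+1}$), and the remaining \emph{core} $c(q)$; near the boundary some cling tiles are absent, but nothing below changes. From the anti-diagonal insertion order one checks three facts: (i) the $l$- and $t$-cling submosaics share neither a tile nor an edge; (ii) every rightmost or bottom edge of either cling mosaic — in particular $e_l$ and $e_t$ themselves — is shared only with a tile lying on a strictly later anti-diagonal, hence absent from $q$, so it is unconstrained; and (iii) a cling mosaic meets the core only along its contact edges, the remaining edges being either those of (ii) or edges of the mosaic-system boundary (which carry no connection point). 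Consequently $q\mapsto(c(q),\ell(q),\tau(q))$ is injective, and its image is exactly the set of triples $(c,\ell,\tau)$ in which $\ell$ is a suitably connected type-$U_k$ cling mosaic and $\tau$ a suitably connected type-$V_{k'}$ cling mosaic whose contact edges agree with those imposed by $c$. Letting $A(c)$ and $A_{\mathrm o}(c)$ be the numbers of admissible $\ell$ for the core $c$ with, respectively, arbitrary $e_l$ and $e_l=\mathrm{o}$, and defining $B(c),B_{\mathrm o}(c)$ analogously for $\tau$ and $e_t$, we obtain $|Q|=\sum_c A(c)B(c)$ and $\bigl|\{q:e_l=e_t=\mathrm o\}\bigr|=\sum_c A_{\mathrm o}(c)B_{\mathrm o}(c)$, both sums ranging over the cores $c$ with $A(c),B(c)\geq 1$.

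Finally, for each such core $c$ the ratio $A_{\mathrm o}(c)/A(c)$ is by definition the cp-ratio of $U_k$ at the contact edges prescribed by $c$, hence lies between the two entries of $u_k$; likewise $B_{\mathrm o}(c)/B(c)$ lies between the entries of $v_{k'}$. Thus $A_{\mathrm o}(c)B_{\mathrm o}(c)\big/\bigl(A(c)B(c)\bigr)\in[S_{kk'},L_{kk'}]$ for every $c$, and since $\sum_c A_{\mathrm o}(c)B_{\mathrm o}(c)\big/\sum_c A(c)B(c)$ is a convex combination of these ratios (with weights proportional to $A(c)B(c)>0$), it too lies in $[S_{kk'},L_{kk'}]$; substituting into the displayed formula for $r_{ij}$ gives $2-L_{kk'}\leq r_{ij}\leq 2-S_{kk'}$. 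I expect the delicate point to be facts (ii) and (iii): one must verify from the precise adjacency pattern of the grid together with the anti-diagonal insertion order that the two cling mosaics genuinely decouple from each other and that their right and bottom edges — above all $e_l$ and $e_t$ — are truly free in an $a(i,j)$-quasimosaic. Once this is in place the remainder is exactly the enumeration already carried out in Lemma~\ref{lem:cp-ratio}.
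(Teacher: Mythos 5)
Your proof is correct and follows essentially the same route as the paper's: both write $r_{ij}=2-(\text{fraction of } a(i,j)\text{-quasimosaics with } e_le_t=\mathrm{oo})$, condition so that this fraction factors into the product of the two cp-ratios $c_kc'_{k'}$ of the associated cling mosaics, and conclude by a weighted-average argument that it lies in $[S_{kk'},L_{kk'}]$. The only difference is one of explicitness — the paper conditions directly on the contact-edge pattern and leaves the decoupling of the $l$- and $t$-cling mosaics implicit, whereas you condition on the whole core and verify the disjointness and freeness facts (i)--(iii) in detail — so the two arguments are substantively the same.
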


\begin{proof}
Suppose that $i \neq 1,m$ and $j \neq 1,n$.
Recall that an $(i,j)$-quasimosaic in $Q_{i,j}$ is obtained from a $a(i,j)$-quasimosaic in $Q_{a(i,j)}$
by attaching a proper leading mosaic tile $M_{i,j}$.
This mosaic tile should be suitably connected
according to the presence of connection points on its left and top edges.
In this stage, there are two possibilities, as follows:
if $M_{i,j}$ is $\widehat{lt}$-cp, then it has two choices,
and if it is $lt$-cp, then it has a unique choice.
Therefore, for given cling mosaics, $M_{i,j}$ has a unique choice only when $e_l e_t =$ oo.

Consider a submosaic consisting of $M_{i,j}$ and $l$- and $t$-cling mosaics.
Assume that the presence of connection points on all contact edges $e_i$'s are given.
Then
$$ \frac{|\{(i,j) \mbox{-quasimosaics with the given } e_i \mbox{'s} \}|}
{|\{a(i,j) \mbox{-quasimosaics with the given } e_i \mbox{'s} \}|} = $$
$$ \frac{|\{\mbox{submosaics consisting of } M_{i,j}
\mbox{ and the a.c.m.'s with the given } e_i \mbox{'s} \}|}
{|\{\mbox{submosaics consisting of only the a.c.m.'s with the given } e_i \mbox{'s} \}|}, $$
where a.c.m. means associated cling mosaic.

Let $c_k$ and $c'_{k'}$ denote the associated cp-ratios of the $l$- and $t$-cling mosaics
for the given contact edges $e_i$'s.
Then the latter quotient of the equality is $2 \times (1 - c_k c'_{k'}) + 1 \times (c_k c'_{k'})
= 2 - c_k c'_{k'}$.
Furthermore, $2 - c_k c'_{k'}$ must lie between $2 - L_{kk'}$ and $2 - S_{kk'}$,
so is the former quotient.
Therefore, $r_{ij}$ lies between $2 - L_{kk'}$ and $2 - S_{kk'}$.
\end{proof}

\begin{lemma} \label{lem:polygon}
Let $m$ and $n$ be integers with $3 \leq m \leq n$.

For $m=3$,
$14 \left(\frac{7}{2}\right)^{n-3} -1 \leq p_{3 \times n} \leq 14 \left(\frac{11}{3} \right)^{n-3} -1$.

For $m=4$,
$8 \left(\frac{49}{8}\right)^{n-2} -1 \leq p_{4 \times n} \leq \frac{9520}{27} \left(\frac{155}{22} \right)^{n-4} -1$.

For $m \geq 5$,
$8 \cdot 6^{m-4} \left(\frac{49}{8}\right)^{n-2} \left(\frac{17}{10}\right)^{(m-4)(n-4)} -1 \leq p_{m \times n}$

\hspace{10mm} and \ $p_{m \times n} \leq \frac{337280}{1863} \left(\frac{2645}{192} \right)^{m-4}
\left(\frac{2415}{176} \right)^{n-4} \left(\frac{31}{16} \right)^{(m-5)(n-5)} -1$.
\end{lemma}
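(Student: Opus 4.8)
The plan is to bound $|Q_{m,n}|=p_{m\times n}+1=\prod_{i,j}r_{i,j}$ through \eqref{eq:1}, reading off each growth ratio from the location of its leading tile. I would sort the $m\times n$ positions $(i,j)$ into three kinds: the \emph{boundary lines} $i\in\{1,m\}$ and $j\in\{1,n\}$; the \emph{interior positions}, where $i\ne1,m$ and $j\ne1,n$; and, among those, the \emph{generic} ones, whose $l$- and $t$-cling mosaics are the generic types $U_1$ and $V_1$ of Figure~\ref{fig5} and which fill a rectangular block bounded away from all four sides, the remaining interior positions forming boundary strips a couple of tiles wide. For a generic position, Lemma~\ref{lem:ratio} together with $u_1=\{\tfrac{1}{4},\tfrac{1}{2}\}$ and $v_1=\{\tfrac{1}{4},\tfrac{3}{5}\}$ from Lemma~\ref{lem:cp-ratio} gives $S_{11}=\tfrac{1}{16}$ and $L_{11}=\tfrac{3}{10}$, hence $\tfrac{17}{10}\le r_{i,j}\le\tfrac{31}{16}$; and since every pair in Lemma~\ref{lem:cp-ratio} obeys $\max u_k\cdot\max v_{k'}\le\tfrac{3}{10}$ and $\min u_k\cdot\min v_{k'}\ge\tfrac{1}{16}$, this two-sided estimate in fact holds at \emph{every} interior position. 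The sharpening that Lemma~\ref{lem:polygon} records over this crude bound comes from noticing that on the strips the degenerate pair $(u_k,v_{k'})$ pins $r_{i,j}$ into a strictly narrower rational interval --- for the corner-most types, where $u=v=\{\tfrac{1}{2},\tfrac{1}{2}\}$, into a single value, e.g. $r_{2,2}=\tfrac{7}{4}$.

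Next I would make the contributions of the corner block, the strips, and the boundary lines explicit. The block near $(1,1)$ is pinned by the quasimosaic counts already recorded ($|Q_{1,1}|=2$, $|Q_{1,2}|=4$, $|Q_{2,1}|=8$, $|Q_{2,2}|=28$, and so on): since the sweep makes every quasimosaic a staircase region, the product of the $r_{i,j}$ over any initial segment of the sweep equals the corresponding $|Q_{i,j}|$ (the product telescopes, with $|Q_{a(1,1)}|=1$), and pursuing this a little further produces the constant prefactors and the leading per-column factors in the lemma. On each boundary strip, Lemma~\ref{lem:ratio} still applies, now with one or both clings degenerate, and returns the narrow interval for $r_{i,j}$ described above. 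On the four boundary lines, where Lemma~\ref{lem:ratio} is silent, I would argue directly from Lemma~\ref{lem:choice}: the outward edge of the leading tile carries no connection point, which together with the already-determined $e_l$ and $e_t$ leaves exactly two admissible tiles along the first row and the first column, so $r_{i,j}=2$ there (matching the recorded $r_{1,1}=r_{1,2}=r_{2,1}=\dots=2$), and at most that many --- typically fewer --- along the last row and the last column. Multiplying all the strip and boundary-line factors along columns (respectively rows) collapses them to the per-column multipliers stated in the lemma, which combine with the interior factor --- at least $(\tfrac{17}{10})^{(m-4)(n-4)}$, at most $(\tfrac{31}{16})^{(m-5)(n-5)}$ after a slightly different apportioning of cells between interior and strips --- to give the claimed formulas.

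The three regimes $m=3$, $m=4$, $m\ge5$ are forced by how fat the generic block is: it is empty when $m=3$ (only row $i=2$ is interior, and even there the $t$-cling is degenerate because row $i-2=0$ is missing), at most one row wide when $m=4$, and genuinely two-dimensional only for $m\ge5$; for the two thin cases, with no usable generic block, I would instead estimate the count by a direct column-by-column (transfer-matrix-flavoured) argument, treating the bottom one or two rows --- out of reach of Lemma~\ref{lem:ratio} --- by a bespoke variant of the cling-mosaic count and checking that the product of the column factors stays between the stated powers. I expect the real obstacle to be exactly this boundary bookkeeping: one must determine, regime by regime --- the top-left corner, each of the four sides, each of the remaining corners, and the overlaps where a side strip abuts a corner --- which of the thirteen cling types occurs and what interval for $r_{i,j}$ it forces, and separately settle $r_{i,j}$ by hand along the four boundary lines; only once every position carries its explicit interval do the closed forms drop out. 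Relaxing the resulting inequalities crudely and using $p_{m\times n}=p_{n\times m}$ to remove the hypothesis $m\le n$ then yields Theorem~\ref{thm:polygon}.
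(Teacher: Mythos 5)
Your plan follows the paper's proof almost exactly: classify the positions of the leading tile by which of the thirteen cling-mosaic types occur there, convert each pair $(u_k,v_{k'})$ into an interval for $r_{i,j}$ via Lemma~\ref{lem:ratio}, handle the four boundary lines directly from Lemma~\ref{lem:choice}, and multiply using equation~(\ref{eq:1}); the paper organizes this bookkeeping into a ``cling mosaic chart'' and a ``growth ratio table'' and, for $m=3$ and $m=4$, simply redraws the charts rather than switching to the transfer-matrix-style argument you sketch. Your observation that every interior pair satisfies $S_{kk'}\ge\frac{1}{16}$ and $L_{kk'}\le\frac{3}{10}$ is correct and is exactly what the paper uses later to pass from this lemma to Theorem~\ref{thm:polygon}.

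One point in your write-up is actually wrong as stated and would break the upper bounds: along the last row and last column you claim only that $r_{i,j}$ is ``at most $2$, typically fewer,'' whereas the lemma's constants require $r_{m,j}=r_{i,n}=1$ exactly. This does follow from the tool you already invoke: for a $\widehat{lt}$-cp position the two admissible tiles of Lemma~\ref{lem:choice} split into one that is $b$-cp and one that is $\hat b$-cp (respectively $r$-cp and $\hat r$-cp), so the boundary condition on the outward edge leaves exactly one choice, and the $lt$-cp tile $T_5$ is already $\hat r\hat b$-cp --- but you need to say this, since ``at most $2$'' loses a factor of up to $2^{m+n-1}$ in the upper bound. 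Beyond that, your proposal defers precisely the computation that constitutes the paper's proof: determining, position by position, which $U_k$ and $V_{k'}$ occur (and verifying the exponent arithmetic, e.g.\ that the stray $\frac{17}{10}$ and $\frac{12}{7}\cdot\frac{7}{4}=3$ factors on the strips assemble into $6^{m-4}(\frac{49}{8})^{n-2}(\frac{17}{10})^{(m-4)(n-4)}$); also note that for $m=5$ the ``generic block'' $4\le i\le m-2$ is empty, yet the general chart still applies, which is why the paper singles out only $m=n=5$ for a separate check.
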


\begin{proof} 
First we handle the general case that $5 \leq m < n$.
Consider a leading mosaic tile $M_{i,j}$ for $4 \leq i \leq m-2$ and $4 \leq j \leq n-3$.
Associated $l$- and $t$-cling mosaics are of types $U_1$ and $V_1$, respectively,
because they are apart from the boundary of the mosaic system.
Since the smallest cp-ratios in $u_1$ and $v_1$ are both $\frac{1}{4}$
and their largest cp-ratios are $\frac{1}{2}$ and $\frac{3}{5}$, respectively,
$r_{ij}$ lies between $2 - L_{11} = \frac{17}{10}$ and $2 - S_{11} = \frac{31}{16}$.
For the remaining leading mosaic tiles, one or both of their associated cling mosaics
are attached to the boundary of the mosaic system.

A chart in Figure~\ref{fig7}, called the {\em cling mosaic chart\/},
illustrates all possible combinations of cling mosaics at each position of leading mosaic tile.
For example, at the position of the leading mosaic tile $M_{3,2}$,
the associated $l$- and $t$-cling mosaics are of types $U_5$ and $V_3$, respectively.

\begin{figure}[h]
\includegraphics{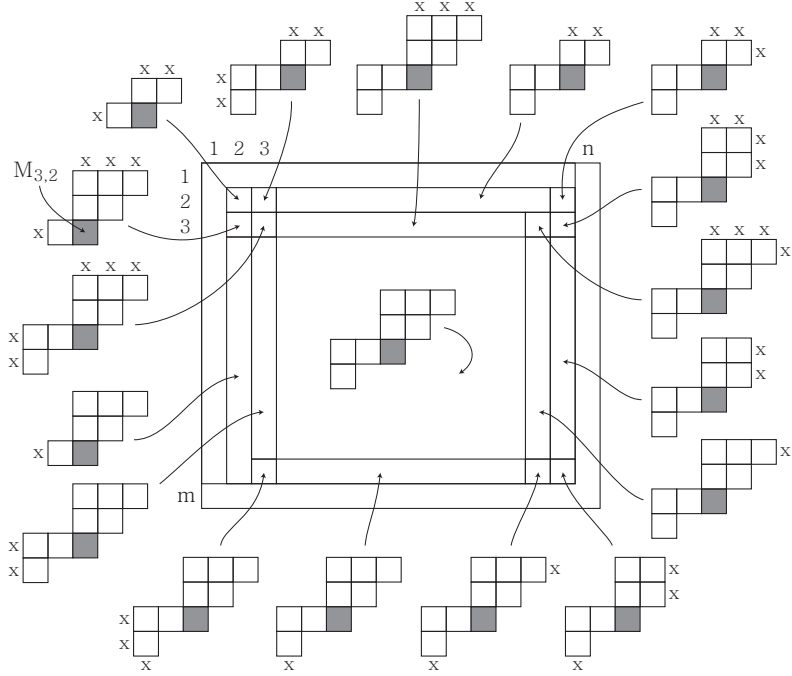}
\caption{Cling mosaic chart for the general case.}
\label{fig7}
\end{figure}

From Lemmas~\ref{lem:cp-ratio} and \ref{lem:ratio} combined with the cling mosaic chart,
we get Table~\ref{tab1}, called the {\em growth ratio table\/}.
Each row explains the placements of leading mosaic tiles $M_{i,j}$, 
the associated multiplications $u_k \cdot v_{k'}$ of cp-ratios,
possible variance of the related growth ratios $r_{i,j}$, and the number of the related mosaic tiles.

Note that for $i=1$ ($j \neq n$), the leading mosaic tile $M_{1,j}$ must be $\hat{t}$-cp.
Assume that $M_{1,j-1}$ is already decided.
Then $M_{1,j}$ has exactly two choices by Lemma~\ref{lem:choice}, so $r_{1j} = 2$.
Similarly,we get $r_{i1} = 2$ for $j=1$ ($i \neq m$).
And for $i=m$, $M_{m,j}$ must be $\hat{b}$-cp.
Assume that $M_{m,j-1}$ and $M_{m-1,j}$ are already decided.
But in any case, $M_{m,j}$ is determined uniquely, so $r_{mj} = 1$.
Similarly we get $r_{in} = 1$ for $j=n$.
Indeed, the method in this paragraph works for all the cases of $3 \leq m \leq n$.
\vspace{3mm}

\renewcommand{\arraystretch}{1.2}

\begin{table}[h]
{\footnotesize
\begin{tabular}{cccc}      \hline \hline
$(i,j)$ of $M_{i,j}$ & \ \ $u_k \cdot v_{k'}$ \ \ & $r_{i,j}$ & \ {\em number of tiles\/} \ \\ \hline
$i=1$ or $j=1$ except $(1,n),(m,1)$ &   & 2 & $m+n-3$ \\
$i=m$ or $j=n$ &   & 1 & $m+n-1$ \\
$4 \leq i \leq m-2$ and $4 \leq j \leq n-3$ & $u_1 \cdot v_1$ &
$\frac{17}{10} \sim \frac{31}{16}$  & $(m-5)(n-6)$ \\
$(2,2)$ & $u_5 \cdot v_7$ & $\frac{7}{4}$ & 1 \\
$(2,3)$ & $u_3 \cdot v_7$ & $\frac{7}{4} \sim \frac{11}{6}$ & 1 \\
$i=2$ and $4 \leq j \leq n-2$ & $u_1 \cdot v_7$ & $\frac{7}{4} \sim \frac{15}{8}$ & $n-5$ \\
$(2,n-1)$ & $u_1 \cdot v_8$ & $\frac{7}{4} \sim \frac{15}{8}$  & 1 \\
$(3,2)$ & $u_5 \cdot v_3$ & $\frac{7}{4} \sim \frac{20}{11}$ & 1 \\
$(3,3)$ & $u_3 \cdot v_3$ & $\frac{7}{4} \sim \frac{62}{33}$ & 1 \\
$i=3$ and $4 \leq j \leq n-3$ & $u_1 \cdot v_3$ & $\frac{7}{4} \sim \frac{21}{11}$ & $n-6$ \\
$(3,n-2)$ & $u_1 \cdot v_4$ & $\frac{7}{4} \sim \frac{21}{11}$ & 1 \\
$(3,n-1)$ & $u_1 \cdot v_6$ & $\frac{7}{4} \sim \frac{23}{12}$ & 1 \\
$4 \leq i \leq m-1$ and $j=2$ & $u_5 \cdot v_1$ & $\frac{17}{10} \sim \frac{15}{8}$ & $m-4$ \\
$4 \leq i \leq m-2$ and $j=3$ & $u_3 \cdot v_1$ & $\frac{17}{10} \sim \frac{23}{12}$ & $m-5$ \\
$4 \leq i \leq m-2$ and $j=n-2$ & $u_1 \cdot v_2$ & $\frac{12}{7} \sim \frac{31}{16}$ & $m-5$ \\
$4 \leq i \leq m-2$ and $j=n-1$ & $u_1 \cdot v_5$ & $\frac{7}{4} \sim \frac{23}{12}$ & $m-5$ \\
$(m-1,3)$ & $u_4 \cdot v_1$ & $\frac{17}{10} \sim \frac{23}{12}$ & 1 \\
$i=m-1$ and $4 \leq j \leq n-3$ & $u_2 \cdot v_1$ & $\frac{17}{10} \sim \frac{23}{12}$ & $n-6$ \\
$(m-1,n-2)$ & $u_2 \cdot v_2$ & $\frac{12}{7} \sim \frac{23}{12}$ & 1 \\
$(m-1,n-1)$ & $u_2 \cdot v_5$ & $\frac{7}{4} \sim \frac{17}{9}$ & 1 \\ \hline \hline
\end{tabular}
}
\vspace{4mm}
\caption{Growth ratio table for the general case.}
\label{tab1}
\end{table}

The chart in Figure~\ref{fig8} illustrates bounds of the growth ratios at each position of leading mosaic tile
according to the growth ratio table.
This is called the {\em growth ratio chart\/}.

\begin{figure}[h]
\includegraphics{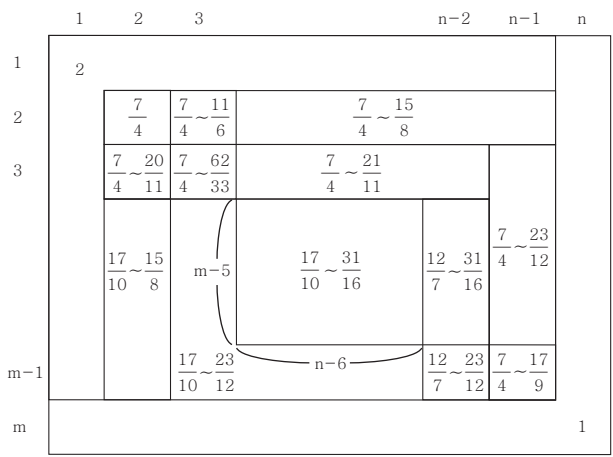}
\caption{Growth ratio chart for the general case.}
\label{fig8}
\end{figure}

From the growth ratio chart for $5 \leq m < n$, we get rigorous lower and upper bounds of $p_{m \times n}$,
which are obtained by merely multiplying every growth ratio at each leading mosaic tile
and subtracting by 1 as in equation~(\ref{eq:1}).
Thus, we have 
$$ 8 \cdot 6^{m-4} \left(\frac{49}{8}\right)^{n-2} \left(\frac{17}{10}\right)^{(m-4)(n-4)} -1 \leq p_{m \times n}, $$
$$ p_{m \times n} \leq \frac{337280}{1863} \left(\frac{2645}{192} \right)^{m-4}
\left(\frac{2415}{176} \right)^{n-4} \left(\frac{31}{16} \right)^{(m-5)(n-5)} -1. $$

For the remaining cases $m = 3$, $m=4$, and $m=n=5$, 
the reader may draw the associated cling mosaic charts and
compute the growth ratio tables.
Then the related growth ratio charts will be obtained as shown in Figure~\ref{fig9}.
Furthermore,
\begin{equation*} 
\begin{split}
14 \left(\frac{7}{2}\right)^{n-3} -1 \leq p_{3 \times n} \leq 14 \left(\frac{11}{3} \right)^{n-3} -1 
\hspace{9mm} & \text{ for } m=3, \text{ and } \\
8 \left(\frac{49}{8}\right)^{n-2} -1 \leq p_{4 \times n} \leq \frac{9520}{27} \left(\frac{155}{22} \right)^{n-4} -1 
\hspace{2mm} & \text{ for } m=4.
\end{split}
\end{equation*}
Indeed for the case of $m=n=5$, we eventually get the same result 
as in the general case, by applying $m=n=5$.
\end{proof}

\begin{figure}[h]
\includegraphics{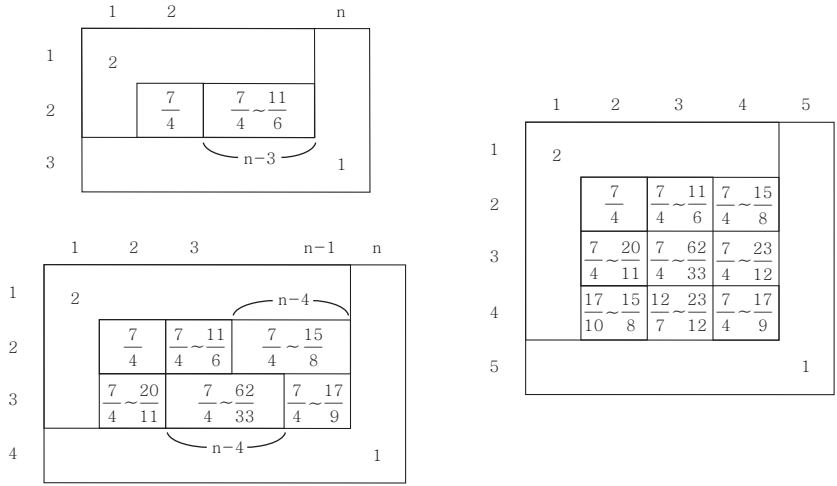}
\caption{Three growth ratio charts for $m = 3$, $m=4$, and $m=n=5$ from the top left to the right.}
\label{fig9}
\end{figure}

\begin{proof}[Proof of Theorem~\ref{thm:polygon}]
The result follows directly from Lemma~\ref{lem:polygon} after loosing the bounds slightly.
Speaking precisely, for any case of $3 \leq m \leq n$,
if $i \neq 1,m$ and $j \neq 1,n$, 
then $r_{ij}$ always lies between $\frac{17}{10}$ and $\frac{31}{16}$.
Furthermore,
if $i=1$ or $j=1$, except $(1,n)$ and $(m,1)$, then $r_{ij} = 2$, and
if $i=m$ or $j=n$, then $r_{ij} = 1$.
Therefore,
$$2^{m+n-3} \left(\frac{17}{10}\right)^{(m-2)(n-2)} -1 \ \leq \ p_{m \times n} \ \leq \
2^{m+n-3} \left(\frac{31}{16}\right)^{(m-2)(n-2)} -1.$$
Note that $-1$ can be ignored for the brief formula,
since this inequality is obtained from Lemma~\ref{lem:polygon} after loosening the bounds slightly.
\end{proof}

\end{document}